\newcommand{\R}{{I\!\!R}}
\def\R{{\rm I}\! {\rm R}}
\def\ss{sequential splitting}
\def\os{operator-splitting}
\def\X{{\bf X}}
\newtheorem{assumption}{Assumption}[section]
\newtheorem{remark}{Remark}[section]
\newtheorem{theorem}{Theorem}[section]
\newtheorem{corollary}{Corollary}[section]
\begin{document}

\pagestyle{headings}

\title{Multiscale Splitting method for Boltzmann-Poisson Equation:
Application for Dynamic of Electrons}
\author{J\"urgen Geiser
\thanks{University of Greifswald, Institute of Physics, Felix-Hausdorff-Str. 6, D-17489 Greifswald, Germany, E-mail: juergen.geiser@uni-greifswald.de}
 \and Thomas Zacher \thanks{Humboldt Universit\"at zu Berlin, Department of Informatics, Unter den Linden 6, D-10099 Berlin, Germany, E-mail: zacher@informatik.hu-berlin.de} }
\maketitle

\begin{abstract}

In this paper we present a model
based on dynamics of the electrons in the
plasma using a simplified Boltzmann equation
coupled with a Poisson equation.

The motivation arose to simulate active plasma resonance spectroscopy
which is used for plasma diagnostic techniques, see \cite{braith2009}, \cite{lapke2010}
and \cite{oberrath2011}.

We are interested on designing splitting methods to the
model problem.

First we reduce to a simplified transport equation
and start to analyze the abstract Cauchy problem based on semi-groups.

Second we extent to the coupled transport and kinetic model and apply the
splitting ideas.

The results are discussed with first numerical experiments to give 
discuss the numerical methods.

\end{abstract}

{\bf Keywords}:  kinetic model, neutron transport, dynamics of electrons, transport equation, splitting schemes, semi-group.\\

{\bf AMS subject classifications.} 35K25, 35K20, 74S10, 70G65.

\section{Introduction}

We motivate our studying on simulating a active
plasma resonance spectroscopy, which is well established in
plasma diagnostic techniques.

To study the model with simulation models, we concentrate on
an abstract kinetic model, which described the
dynamics of electrons in the plasma by using a 
Boltzmann equation. The Boltzmann equation is coupled
with the electric field and we obtain coupled 
partial differential equations.

Our combined model is done simplified to
apply with functional analytical tools.
We discuss the description of a positive semi-group,
which helps to do the numerical estimations in the
splitting schemes.

Second a numerical method is discussed with respect to
separate differential and integral part of the 
equations.

The numerical approximation is done by applying 
splitting methods of second order.

The paper is outlined as follows.

In section \ref{modell} we present our mathematical model 
and a possible reduced model for the further approximations.

The functional analytical setting with semi-groups
are discussed in section \ref{funct}.
The splitting schemes are presented in in Section \ref{splitt}
and the numerical integration of the integro-part is discussed in 
Section \ref{integro}.

Numerical experiments are done in Section \ref{num}.
In the contents, that are given in Section \ref{concl}, 
we summarize our results.

\section{Mathematical Model}
\label{modell}

In the following a model is presented due to the 
motivation in  \cite{braith2009}, \cite{lapke2010}
and \cite{oberrath2011}.

The models consider a fluid dynamical approach
of the natural ability of plasmas to resonate
in the near of the electron plasma frequency $\omega_{pe}$.

Here we specialize to an abstract kinetic model to
describe the dynamics of the electrons in the plasma, that
allows to do the resonance-analysis.

The Boltzmann equation for the electron particles  are given as
\begin{eqnarray}
\label{neutron}
&& \frac{\partial f(x,v,t)}{\partial t} = - v \cdot \nabla_x f(x,v,t) - \frac{e}{m_e} \nabla_x \phi \cdot \nabla_v f(x,v,t) \nonumber \\
&&  - \sigma(x,v,t)  f(x,v,t) + \int_V \kappa(x,v,v') f(x, v', t) \; dv' , \\
&& f(x,v,0) = f_0(x,v) , 
\end{eqnarray}
and boundary conditions are postulated at the boundaries of $P$ (plasma).

In front of the materials we assume complete reflection of the
electrons due to the sheath $f(v_{||} + v_{\perp})$ with $v_{||}$
is the parallel and $v_{\perp}$ perpendicular to the surface normal
vector. $\phi$ is the electric field.

The Boltzmann's equation has to be coupled with the electric field.
The electrostatic approximation of the field is represented
by the potential that is valid on the complete volume $S$.

We apply the the Poisson's equation:
\begin{eqnarray}
\label{neutron}
&& - \nabla_x \cdot ( \epsilon \nabla \phi ) = \left\{
\begin{array}{c c}
e (n_i - \int \; f \;  dS)  & \mbox{in}  \; P \\
0 & \mbox{in} D 
\end{array} ,
\right.
\end{eqnarray}
the permittivity is equal to $\epsilon_0$ in the plasma $P$
and $\epsilon_0 \epsilon_D$ in the dielectric $D$.
$\phi$ fulfills the boundary conditions $U_n$ at any electrode $E_n$
and $- { \bf n} \cdot \nabla \phi = 0$ at isolator $I$,
whereas ${\bf n}$ is the normal vector of the isolator surface.

On the surface of the dielectric a surface charge $\sigma$ may
accumulate and leads to a transition condition:
\begin{eqnarray}
\label{neutron}
&& \Delta (\epsilon \nabla \phi) = - \sigma.
\end{eqnarray}

\section{Semi-groups for Transport Equations}
\label{funct}

In the following, we derive the exponential growth of the transport semi-groups
that is used in the section of the numerical methods.

We discuss in the following subsections two directions
of the transport regimes:
\begin{itemize}
\item Neutron transport and
\item Electron transport.
\end{itemize}

\subsection{Transport model for the neutrons}

For this model we can assume 
that $f(x, v, t)$ describe the density distribution of particles
at position $x \in S$ with speed $v \in V$ at time $t \in [0, T]$,
see also \cite{EN00} and \cite{rhandi2002}.

The space $S$ is assumed to be a compact and convex subset of $\R^3$
with nonempty interior, and the velocity space $V$ is:

 $V := \{ v \in \R^3: v_{min} \le || ||_{2} \le v_{max} \}$

for $v_{min} > 0 $ and $v_{max} < \infty$.

\begin{assumption}
We have the following assumptions:

\begin{itemize}
\item Particles move according to their speed $v$.
\item Particles are absorbed with function $\sigma$ (e.g. probability function), depending on $x$ and $v$.
\item Particles are scattered to a scattering kernel $\kappa$ depending on position $x$, incoming speed $v'$ and outgoing speed $v$.
\end{itemize}

\end{assumption}

The neutron transport is given as:
\begin{eqnarray}
\label{neutron}
&& \frac{\partial f(x,v,t)}{\partial t} = - v \cdot \nabla f(x,v,t) - \sigma(x,v,t)  f(x,v,t) \nonumber \\
&& + \int_V \kappa(x,v,v') f(x, v', t) \; dv' , \\
&& f(x,v,0) = f_0(x,v) ,
\end{eqnarray}
and boundary conditions are included in the transport operator $A_0$ see in the
following abstract Cauchy problem.

In the following we deal with the abstract Cauchy Problem for the
simplified model.

\subsubsection{Abstract Cauchy problem: Transport model for the neutrons}

We have a Banach space $\X:= L^1(S \times V)$ with Lebesgue measure on $S \times V \subset \R^6$ and define the abstract Cauchy problem as:
\begin{eqnarray}
\label{acp_1}
&& \frac{d u(t)}{dt} = B u(t) \; , \\
&& \frac{d u(t)}{dt} = (A_0 - M_{\sigma} + K_{\kappa}) u(t) \; , \\
&& u(0) = u_0,
\end{eqnarray}
where $u \in \X$.

We have the following operators:

1.) Collision-less transport operator

2.) Absorption operator 

3.) Scattering Operator

An important results for further numerical analysis is the
fact, that the transport semi-group can be estimated by an exponential 
growth, see \cite{EN00}:

\begin{corollary}

We assume that $s(B) > - \infty$ is a dominant eigenvalue and $(S(t))_{t \ge 0}$is irreducible
Then the transport semi-group $(S(t))_{t \ge 0}$ has balanced exponential
growth.
There exists a one-dimensional projection $P$ satisfying $0 < P f$ whenever $0 < f$ such that:
\begin{eqnarray}
\label{acp_2}
&& || \exp(- s(B) t) S(t) - P || \le M \exp(- \epsilon t) ,
\end{eqnarray}
for all $t \ge 0$ and appropriate $M \ge 1$ and $\epsilon > 0$.
\end{corollary}

\subsection{Transport model for the electrons or ions}

For this model we can assume 
that $f(x, v, t)$ describe the density distribution of particles
at position $x \in S$ with speed $v \in V$ at time $t \in [0, T]$,
see also \cite{EN00} and \cite{rhandi2002}.

The space $S$ is assumed to be a compact and convex subset of $\R^3$
with nonempty interior, and the velocity space $V$ is:

 $V := \{ v \in \R^3: v_{min} \le || ||_{2} \le v_{max} \}$

for $v_{min} > 0 $ and $v_{max} < \infty$.

\begin{assumption}
We have the following assumptions:

\begin{itemize}
\item Particles move according to their speed $v$.
\item Particles are absorbed with function $\sigma$ (e.g. probability function), depending on $x$ and $v$.
\item Particles are scattered to a scattering kernel $\kappa$ depending on position $x$, incoming speed $v'$ and outgoing speed $v$.
\item Particles are influenced by the static electric field $\phi$, which can be derived by the kinetic theory.
\end{itemize}

\end{assumption}

The electron transport is given as:
\begin{eqnarray}
\label{neutron}
&& \frac{\partial f(x,v,t)}{\partial t} = - v \cdot \nabla_x f(x,v,t) - \frac{e}{m_e} \nabla_x \phi \cdot \nabla_v f(x,v,t) \nonumber \\
&&  - \sigma(x,v,t)  f(x,v,t) + \int_V \kappa(x,v,v') f(x, v', t) \; dv' , \\
&& f(x,v,0) = f_0(x,v) , 
\end{eqnarray}
and boundary conditions are included in the transport operators. 
$\phi$ is the electric field.

Further we have the Poisson's equation:
\begin{eqnarray}
\label{neutron}
&& - \nabla_x \cdot ( \epsilon \nabla \phi ) = \left\{
\begin{array}{c c}
e (n_i - \int \; f \;  dS)  & \mbox{in}  \; P \\
0 & \mbox{in} D 
\end{array} ,
\right.
\end{eqnarray}
the permittivity is equal to $\epsilon_0$ in the plasma $P$
and $\epsilon_0 \epsilon_D$ in the dielectric $D$.

\begin{eqnarray}
\label{neutron}
&& \frac{\partial f(x,v,t)}{\partial t} = - v \cdot \nabla_x f(x,v,t) - \nabla_x D \cdot \nabla_x f(x,v,t) \nonumber \\
&&  - \sigma(x,v,t) f(x,v,t) + \int_V \kappa(x,v,v') f(x, v', t) \; dv' , \\
&& f(x,v,0) = f_0(x,v) , 
\end{eqnarray}
and boundary conditions are included in the transport operators $A_0$ and $A_1$ see in the
following abstract Cauchy problem.
$D$ is the diffusion parameter that includes the electric 
field.

Next we deal with the abstract Cauchy Problem for the
simplified model.

\subsubsection{Abstract Cauchy problem: Transport model for the neutrons}

We have a Banach space $\X:= L^1(S \times V)$ with Lebesgue measure on $S \times V \subset \R^6$ and define the abstract Cauchy problem as:
\begin{eqnarray}
\label{acp_1}
&& \frac{d u(t)}{dt} = B u(t) \; , \\
&& \frac{d u(t)}{dt} = (A_0 + A_1 - M_{\sigma} + K_{\kappa}) u(t) \; , \\
&& u(0) = u_0,
\end{eqnarray}
where $u \in \X$.

We have the following operators:

1.) Collision-less transport operator

2.) Diffusion operator

3.) Absorption operator 

4.) Scattering Operator

An important results for further numerical analysis is the
fact, that the transport semi-group can be estimated by an exponential 
growth.

\begin{corollary}

We assume that $s(B) > - \infty$ is a dominant eigenvalue and $(S(t))_{t \ge 0}$is irreducible
Then the transport semi-group $(S(t))_{t \ge 0}$ has balanced exponential
growth.
There exists a one-dimensional projection $P$ satisfying $0 < P f$ whenever $0 < f$ such that:
\begin{eqnarray}
\label{acp_2}
&& || \exp(- s(B) t) S(t) - P || \le M \exp(- \epsilon t) ,
\end{eqnarray}
for all $t \ge 0$ and appropriate $M \ge 1$ and $\epsilon > 0$.
\end{corollary}

In the next section we discuss the splitting schemes.

\section{Splitting schemes}
\label{splitt}

The operator-splitting methods are used to solve complex models in
the geophysical and environmental physics, they are developed and
applied in \cite{stra68}, \cite{verwer98} and \cite{zla95}. This
ideas based in this article are solving simpler equations with
respect to receive higher order discretization methods for the
remain equations. For this aim we use the operator-splitting
method and decouple the equation as follows described.

\subsection{Splitting methods of first order for linear equations}

First we describe the simplest \os , which is called {\it \ss} for
the following system of ordinary linear differential equations:
\begin{eqnarray}
\label{gleich_kap31} && \partial_t c(t) = A \; c(t) \; + \; B \;
c(t) \;  ,
\end{eqnarray}
whereby the initial-conditions are $c^n = c(t^n)$. The operators
$A$ and $B$ are spatially  discretized operators, e.g. they
correspond to the discretized in space  convection and diffusion
operators (matrices). Hence, they can be considered as bounded
operators.

The sequential  \os \  method is introduced as a method which
solve the two sub-problems  sequentially, where the different
sub-problems are connected via  the initial conditions. This means
that we replace the original problem (\ref{gleich_kap31}) with the
sub-problems
\begin{eqnarray}
\label{gleich_kap3_3}
&& \frac{\partial c^*(t)}{\partial t} = A c^*(t) \; , \quad \mbox{with} \;
c^*(t^n) = c^n \; , \\
&& \frac{\partial c^{**}(t)}{\partial t} = B c^{**}(t) \; , \quad
\mbox{with} \; c^{**}(t^n) = c^*(t^{n+1}) \; , \nonumber
\end{eqnarray}
whereby the splitting time-step is defined as  $\tau_n = t^{n+1} -
t^n$. The approximated split solution is defined as  $c^{n+1} =
c^{**}(t^{n+1})$.

Clearly, the change of the original problems with the sub-problems
usually results some error, called {\it splitting error}.
Obviously, the splitting  error of the \ss \ method can be derived
as follows (cf. e.g.\cite{gei_2011})
\begin{eqnarray}
\label{gleich_kap3_6} \rho_n & = & \frac{1}{\tau} (\exp(\tau_n
(A+B)) - \exp(\tau_n B) \exp(\tau_n
A) ) \; c(t^n) \nonumber \\
&& \nonumber \\
& = & \frac{1}{2} \tau_n [A, B] \; c(t^n) + O(\tau^2) \; .
\end{eqnarray}
whereby $[A, B] := A B - B A$ is the commutator of $A$ and $B$.
Consequently, the splitting error is  $O(\tau_n)$ when the
operators $A$ and $B$ do not commute, otherwise the method is
exact. Hence, by definition, the \ss \ is called {\it first order
splitting method }.

\subsection{Sequential splitting method  for non-linear problems}

We could use the result for the general formulation
of nonlinear ordinary differential equations:
\begin{eqnarray}
\label{gleich_kap3_7}
&& c'(t) = F_1(t,c(t)) + F_2(t, c(t)) \; ,
\end{eqnarray}
where the initial-conditions are given as $c^n = c(t^n)$.

As before, we can  decouple the above problem into two (typically
simpler) sub-problems, namely
\begin{eqnarray}
\label{gleich_kap3_8}
&& \frac{\partial c^*(t)}{\partial t} = F_1(t,c^*(t)) \; \mbox{with} \; t^n \le t \le
t^{n+1}  \; \mbox{and} \; c^*(t^n) = c^n \; ,  \\
&& \frac{\partial c^{**}(t)}{\partial t} = F_2(t,c^{**}(t)) \; \mbox{with} \; t^n \le t \le
t^{n+1}  \; \mbox{and} \; c^{**}(t^n) = c^*(t^{n+1}) \; ,
\end{eqnarray}
where the initial-values are given as $c^n = c(t^n)$ and the split
approximation on the next time level is defined as  $c^{n+1} =
c^{**}(t^{n+1})$.

For this case the splitting error can be defined by use of the
Jacobians of the non-linear mappings $F_1$ and $F_2$, namely as
\begin{eqnarray}
\label{gleich_kap3_9}
&& \rho_n = \frac{1}{2} \tau [\frac{\partial F_1}{\partial c} F_2,
    \frac{\partial F_2}{\partial c} F_1](t^n, c(t^n)) + {\mathcal O}(\tau_n^2) \; .
\end{eqnarray}
Hence, for the general case the splitting error has of first
order, i.e.  $O(\tau_n)$.

\subsection{Higher order splitting methods for linear operators}

So far we defined the \ss \ which has first order accuracy.
However in the practical computations  in many cases we require
splittings of higher order accuracy.

\subsubsection{Symmetrically weighted sequential splitting.}

In the following we introduce a weighted sequential splitting
method, which is based on two sequential splitting methods with
different ordering of the operators. I.e. we consider again the
Cauchy problem (\ref{gleich_kap31}) and we define the \os \ on the
time interval  $[t^n, t^{n+1}] $ (where $t^{n+1}=t^n+\tau_n$) as
follows

\begin{eqnarray}
\label{gleich_kap3_3}
&& \frac{\partial c^*(t)}{\partial t} = A c^*(t) \; , \quad \mbox{with} \;
c^*(t^n) = c^n \; , \\
&& \frac{\partial c^{**}(t)}{\partial t} = B c^{**}(t) \; , \quad \mbox{with}
\; c^{**}(t^n) = c^*(t^{n+1}) \; . \nonumber
\end{eqnarray}

and
\begin{eqnarray}
\label{gleich_kap3_3} && \frac{\partial v^*(t)}{\partial t} = B
v^*(t) \; , \quad \mbox{with} \;
v^*(t^n) = c^n \; , \\
&& \frac{\partial v^{**}(t)}{\partial t} = A v^{**}(t) \; , \quad
\mbox{with} \; v^{**}(t^n) = v^*(t^{n+1}) \; . \nonumber
\end{eqnarray}
where $c^n$ is known.

Then the approximation at the next time-level $t^{n+1}$ is defined
as
\begin{eqnarray}
\label{gleich_kap3_3}
&& c^{n+1} = \frac{c^{**}(t^{n+1}) +  v^{**}(t^{n+1})}{2}
\end{eqnarray}

The splitting error of this operator splitting method is derived
as follows (cf. \cite{CFH03})
\begin{eqnarray}
\label{gleich_kap3_6} \rho_n & = & \frac{1}{\tau_n} \{\exp(\tau_n
(A+B)) - \frac{1}{2} [\exp(\tau_n B) \exp(\tau_n A)+\exp(\tau_n A)
\exp(\tau_n
B) ]\} \; c(t^n) \nonumber \\
&& \nonumber \\
& = & O(\tau^2) \; .
\end{eqnarray}
An easy computation shows that in general case the splitting error
of this method is  $O(\tau^2)$, i.e. the method is of second order
accurate.  (We note that in the case  of commuting operators $A$
and $B$  the method is exact, i.e. the splitting error vanishes.)

\subsubsection{Strang-Marchuk splitting method.}

One of the most popular and widely used \os s   is the so-called
{\it Strang splitting (or Strang-Marchuk splitting)}, defined as
follows \cite{Mar68,stra68}. The methods reads as follows
\begin{eqnarray}
\label{gleich_kap3_3}
&& \frac{\partial c^*(t)}{\partial t} = A c^*(t) \; , \; \mbox{with} \; t^n \le t \le
t^{n+1/2}  \; \mbox{and} \; c^*(t^n) = c^n \; , \\
&& \frac{\partial c^{**}(t)}{\partial t} = B c^{**}(t) \; ,  \; \mbox{with} \; t^{n} \le t \le
t^{n+1}  \; \mbox{and} \; c^{**}(t^n) = c^*(t^{n+1/2}) \; , \nonumber \\
&& \frac{\partial c^{***}(t)}{\partial t} = A c^{***}(t) \; ,  \; \mbox{with} \;
t^{n+1/2} \le t \le t^{n+1} \; \mbox{and} \; c^{***}(t^{n+1/2}) = c^{**}(t^{n+1}) \; , \nonumber
\end{eqnarray}
where $t^{n+1/2} = t^n +0.5  \tau_n$ and the approximation on the
next time level $t^{n+1}$  is defined as $c^{n+1} =
c^{***}(t^{n+1})$.

The splitting error of the Strang splitting  is
\begin{eqnarray}
\label{gleich_kap3_11} && \rho_n = \frac{1}{24} (\tau_n)^2 ([B,
[B, A]] - 2 [A, [A, B]]) \; c(t^n) + O(\tau_n^4) \; .
\end{eqnarray}
(See, e.g. (\cite{hun96}. ) This means that this \os \ is of
second order, too. (We note that under some special conditions for
the operators $A$ and $B$, the Strang splitting has third order
accuracy and even can be exact \cite{farago04}. )

 In our application the first order splitting for
the convection-reaction- and the diffusion-dispersion-term are
applied, because of the dominance of the space-error. The
time-error for this combination was only a constant in the total
error.

In the next subsection we present the iterative-splitting method.

\subsection{Iterative splitting method}

The following algorithm is based on the iteration with fixed
splitting discretization step-size $\tau$, namely, on the time
interval $[t^n,t^{n+1}]$ we solve the following sub-problems
consecutively  for $i=0,2, \dots 2m$. (Cf. \cite{kan03} and
\cite{glow04}.)

\begin{eqnarray}
 && \frac{\partial c_i(t)}{\partial t} = A
c_i(t) \; + \; B c_{i-1}(t), \;
\mbox{with} \; \; c_i(t^n) = c^{n} \label{gleich_kap33a} \\
&& \mbox{and} \; c_{0}(t^n) = c^n \; , \; c_{-1} = 0.0 , \nonumber
\\\label{gleich_kap33b}
&& \frac{\partial c_{i+1}(t)}{\partial t} = A c_i(t) \; + \; B c_{i+1}(t), \; \\
&& \mbox{with} \; \; c_{i+1}(t^n) = c^{n}\; , \nonumber
\end{eqnarray}
 where $c^n$ is the known split
approximation at the  time level  $t=t^{n}$. The split
approximation at the time-level $t=t^{n+1}$ is defined as
$c^{n+1}=c_{2m+1}(t^{n+1})$. (Clearly, the function $c_{i+1}(t)$
depends on the interval $[t^n,t^{n+1}]$, too, but, for the sake of
simplicity, in our notation we omit the dependence on $n$.)

\smallskip
In the following we will analyze  the convergence and the rate of
the convergence  of the method
(\ref{gleich_kap33a})--(\ref{gleich_kap33b}) for $m$ tends to
infinity for the linear operators $A,B: \!  {\X} \rightarrow {\X}
$ where we assume that these operators and their sum  are
generators of the $C_0$ semi-groups. We emphasize that these
operators aren't necessarily bounded, so, the convergence is
examined in  general Banach space setting.

\smallskip
\begin{theorem}
Let us consider the abstract Cauchy problem in a Banach space \X

\begin{equation}
\begin{array}{c}
{\displaystyle \partial_t c(t) = A c(t) + B c(t), \quad 0 < t
\leq T } \\
\noalign{\vskip 1ex} {\displaystyle c(0)=c_0 }
\end{array} \label{eq:ACP}
\end{equation}

\noindent where  $A,B,A+B: \!  {\X} \rightarrow {\X} $ are given
linear  operators being generators of the $C_0$-semi-group and $c_0
\in \X$ is a given element. Then the iteration process
(\ref{gleich_kap33a})--(\ref{gleich_kap33b}) is convergent  and
the and the rate of the convergence is of second order.
\end{theorem}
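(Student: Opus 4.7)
The plan is to work in the Banach space setting via the variation-of-constants (Duhamel) formula and to reduce convergence to a standard iterated integral estimate on the local interval $[t^n, t^{n+1}]$. First I would let $c(t)$ denote the exact solution of (\ref{eq:ACP}) with $c(t^n) = c^n$ and define the iteration errors $e_i(t) := c(t) - c_i(t)$. Subtracting the iteration equations (\ref{gleich_kap33a})--(\ref{gleich_kap33b}) from the full Cauchy problem gives
\[
\partial_t e_i(t) = A\, e_i(t) + B\, e_{i-1}(t), \qquad e_i(t^n) = 0,
\]
and analogously $\partial_t e_{i+1}(t) = A\, e_i(t) + B\, e_{i+1}(t)$ with $e_{i+1}(t^n) = 0$. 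Treating the ``previous'' iterate as an inhomogeneous source, the mild formulation yields
\[
e_i(t) = \int_{t^n}^{t} e^{(t-s) A}\, B\, e_{i-1}(s)\, ds, \qquad e_{i+1}(t) = \int_{t^n}^{t} e^{(t-s) B}\, A\, e_i(s)\, ds.
\]

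Second, I would exploit the $C_0$-semigroup bounds $\|e^{sA}\|, \|e^{sB}\| \le M \exp(\omega s)$ guaranteed by the hypothesis. Taking norms in the Duhamel formulas and using these estimates yields a coupled recursion of the form $\|e_{i+1}(t)\| \le K\, \tau\, \sup_{s \in [t^n, t^{n+1}]} \|e_i(s)\|$, where $K$ depends only on $M$, $\omega T$, and on operator bounds taken in the appropriate graph norms of $A$ and $B$. Starting from the base case $e_0(t) = c(t) - \exp((t-t^n) A)\, c^n = \mathcal{O}(\tau)$, induction over $i$ gives $\|e_{2m+1}(t)\| \le C_m\, \tau^{2m+2}$ on $[t^n, t^{n+1}]$. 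This is the local splitting error, and for $m \ge 1$ the standard telescoping over the time grid on $[0, T]$ produces the claimed global second-order rate $\mathcal{O}(\tau^2)$, in particular convergence as $\tau \to 0$.

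The main obstacle is the unboundedness of $A$ and $B$: the mild expressions involve $A e_i$ and $B e_{i-1}$, so one must justify that the iterates remain in the appropriate domains and that the constants in the recursion are finite. I would address this either by (i) assuming additional regularity on the initial datum $c_0$, for instance membership in a common core of $A$, $B$, and $A + B$, sufficient to keep $A e_i$ and $B e_{i-1}$ in $\X$ uniformly in $i$, or by (ii) first proving the estimate for the Yosida approximations $A_\lambda, B_\lambda$, which are bounded, and then passing to the limit via the Trotter--Kato approximation theorem. Carrying the domain bookkeeping through the induction is the delicate part; once admissibility is established, the integral estimates and the resulting order of convergence are routine.
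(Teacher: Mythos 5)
Your proposal follows essentially the same route as the paper: form the error functions $e_i=c-c_i$, write the error equations as inhomogeneous abstract Cauchy problems with the previous error as source, apply the variation-of-constants formula on $[t^n,t^{n+1}]$, and use the $C_0$-semigroup growth bound $\|e^{t\mathcal{A}}\|\le K e^{\omega t}$ to gain one factor of $\tau_n$ per iteration, hence two per double sweep. The only structural difference is that the paper stacks two consecutive iterates into $\mathcal{E}_i=(e_i,e_{i+1})^T$ and works with the single semigroup generated by the operator matrix $\mathcal{A}=\left(\begin{smallmatrix} A & 0\\ A & B\end{smallmatrix}\right)$ on the product space $\X\times\X$, whereas you apply Duhamel componentwise with $e^{tA}$ and $e^{tB}$; the two devices are interchangeable. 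Where you genuinely improve on the paper is in confronting the unboundedness: the source term is really $(B e_{i-1},0)^T$, but the paper writes $\mathcal{F}_i=(e_{i-1},0)^T$ and bounds the integral by $\|e_{i-1}\|\int\|e^{\mathcal{A}(t-s)}\|\,ds$, silently absorbing the unbounded operator $B$ into the constant $K$. Your two remedies (a common invariant core for the data, or Yosida/Trotter--Kato approximation) are precisely what is needed to make the recursion $\|e_{i+1}\|\le K\tau_n\sup_s\|e_i(s)\|$ rigorous, and you also supply the base case $e_0=\mathcal{O}(\tau_n)$ and the telescoping from the local to the global estimate, both of which the paper omits but which are needed to convert the per-iteration gain into an actual statement about the order of convergence.
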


\begin{proof}
Let us consider the  iteration
(\ref{gleich_kap33a})--(\ref{gleich_kap33b}) on the  sub-interval
$[t^n,t^{n+1}]$. For the error function $e_i(t)= c(t)-c_i(t)$ we
have the relations

\begin{equation}
\begin{array}{c}
{\displaystyle \partial_t e_i(t) = A e_i(t) + B e_{i-1}(t), \quad
t \in (t^n,t^{n+1}], }\\
\noalign{\vskip 1ex} {\displaystyle e_i(t^n)=0}
\end{array} \label{eq:err1}
\end{equation}
and
\begin{equation}
\begin{array}{c}
{\displaystyle \partial_t e_{i+1}(t) = A e_{i}(t) + B e_{i+1}(t),
\quad
t \in (t^n,t^{n+1}], }\\
\noalign{\vskip 1ex} {\displaystyle e_{i+1}(t^n)=0 }
\end{array} \label{eq:err2}
\end{equation}
for $m=0,2,4, \dots$, with $e_0(0)=0$ and $e_{-1}(t)= c(t)$. In
the following we use the notations $\X^2 $ for the product space
$\X \times \X$ enabled with the norm $\| (u,v) \| = \max \{ \| u
\|, \| v \| \} $ ($u,v \in \X$). The elements  $ {\mathcal E}_i(t)
$, $ {\mathcal F}_i(t) \in \X^2 $ and  the linear operator
${\mathcal A}: \X^2 \rightarrow \X^2 $ are defined as follows
\begin{equation}
\begin{array}{c}
{\displaystyle {\mathcal E}_i(t)= \left[ \begin{array}{cc} e_{i}(t) \\
e_{i+1}(t) \end{array} \right]; \quad {\mathcal F}_i(t)= \left[ \begin{array}{cc} e_{i-1}(t) \\
0 \end{array} \right]; \quad {\mathcal A}= \left[ \begin{array}{cc} A \quad 0 \\
A \quad B \end{array} \right].}
\end{array} \label{eq:defeaf}
\end{equation}
Then, using the notations (\ref{eq:defeaf}), the relations
(\ref{eq:err1})--(\ref{eq:err2}) can be written in the form
\begin{equation}
\begin{array}{c}
{\displaystyle \partial_t {\mathcal E}_i(t) = {\mathcal
A}{\mathcal E}_i(t) + {\mathcal F}_i(t), \quad
t \in (t^n,t^{n+1}], }\\
\noalign{\vskip 1ex} {\displaystyle {\mathcal E}_i(t^n)=0}.
\end{array} \label{eq:erracp}
\end{equation}
Due to our assumptions, ${\mathcal A}$ is a generator of the
one-parameter $C_0$ semi-group $ ({\mathcal A}(t))_{t\ge 0} $,
hence using the variations of constants formula,   the solution of
the abstract Cauchy problem (\ref{eq:erracp}) with homogeneous
initial condition  can be written as
\begin{equation}
\begin{array}{c}
{\displaystyle  {\mathcal E}_i(t) = \int_{t^n}^{t}{\exp ({\mathcal
A}(t-s)) {\mathcal F}_i(s)ds}, \quad t \in [t^n, t^{n+1}].}
\end{array} \label{eq:errsol}
\end{equation}
(See, e.g. \cite{EN00}.)  Hence, using the denotation
\begin{equation}
\begin{array}{c}
{\displaystyle \|{\mathcal E}_i}\|_{\infty} =\sup_{t \in
[t^n,t^{n+1}]}\|{\mathcal E}_i(t)\|\end{array}
\label{eq:norminfty}
\end{equation}
we have
\begin{equation}
\begin{array}{c}
{\displaystyle \| {\mathcal E}_i\|(t) \le  \|{\mathcal
F}_i\|_{\infty} \int_{t^n}^{t}\|{\exp ({\mathcal A}(t-s))\| ds}}=
\\\\ {\displaystyle =
\|{ e}_{i-1}  \|\int_{t^n}^{t}\|{\exp ({\mathcal A}(t-s))\| ds},
\quad t \in [t^n, t^{n+1}].}
\end{array} \label{eq:err1}
\end{equation}
Since $ ({\mathcal A}(t))_{t\ge 0} $ is a semi-group therefore  the
so called {\it growth estimation}

\begin{equation}
\begin{array}{c}
{\displaystyle  \| \exp ({\mathcal A}t)\| \le K \exp (\omega t) };
\quad t\ge 0
\end{array} \label{eq:growth}
\end{equation}
holds with some numbers  $K \ge 0$ and $\omega \in \R$
\cite{EN00}.

\begin{itemize}

\item {Assume that  $ ({\mathcal A}(t))_{t\ge 0} $ is a bounded or
exponentially stable semi-group, i.e. (\ref{eq:growth}) holds with
some $\omega \le 0$. Then obviously the estimate
\begin{equation}
\begin{array}{c}
{\displaystyle  \| \exp ({\mathcal A}t)\| \le K ; \quad t\ge 0 }
\end{array} \label{eq:growth1}
\end{equation}
holds, and, hence on base of (\ref{eq:err1}), we have the relation
\begin{equation}
\begin{array}{c}
{\displaystyle \| {\mathcal E}_i\|(t) \le  K \tau_n
\|{e}_{i-1}\|}, \quad t \in (0,\tau_n).
\end{array} \label{eq:err3}
\end{equation}
}

\item Assume that $ ({\mathcal A}(t))_{t\ge 0} $ has an exponential growth with
 some $\omega > 0$. Using (\ref{eq:err1}) we have
\begin{equation}
\begin{array}{c}
{\displaystyle \int_{t^n}^{t^{n+1}}\|{\exp ({\mathcal A}(t-s))\|
ds} \le K_{\omega}(t), \quad t \in [t^n},{t^{n+1}] },
\end{array} \label{eq:posomega}
\end{equation}
where
\begin{equation}
\begin{array}{c}
 {\displaystyle  K_{\omega}(t) =\frac{K}{\omega}\left(\exp(\omega
 (t-t^n))
 -1 \right), \quad t \in [t^n},{t^{n+1}].}
\end{array} \label{eq:komega}
\end{equation}
Hence
\begin{equation}
\begin{array}{c}
 {\displaystyle  K_{\omega}(t) \le \frac{K}{\omega}\left(\exp(\omega
 \tau_n)
 -1 \right)= K \tau_n + {\mathcal O}(\tau_n^2)}
\end{array} \label{eq:komegaest}
\end{equation}

\end{itemize}

The estimations (\ref{eq:err3}) and (\ref{eq:komegaest}) result in
that
\begin{equation}
\begin{array}{c}
{\displaystyle \| {\mathcal E}_i\|_\infty = K \tau_n
\|{e}_{i-1}\|}+ {\mathcal O }(\tau_n^2).
\end{array} \label{eq:basic1}
\end{equation}
Taking into the account the definition of ${\mathcal E}_i$ and the
norm $\| \cdot \|_\infty$, we obtain \begin{equation}
\begin{array}{c}
{\displaystyle \| e_i\| = K \tau_n \|{e}_{i-1}\|}+ {\mathcal O
}(\tau_n^2),
\end{array} \label{eq:basic2}
\end{equation}
and hence \begin{equation}
\begin{array}{c}
{\displaystyle \| e_{i+1}\| = K_1 \tau_n^2 \|{e}_{i-1}\|}+
{\mathcal O }(\tau_n^3),
\end{array} \label{eq:basic3}
\end{equation}
which proves our statement.

\end{proof}

\begin{remark}{\rm
When $A$ and $B$ are matrices (i.e.
(\ref{gleich_kap33a})--(\ref{gleich_kap33b}) is a system of the
ordinary differential equations), for the growth estimation
(\ref{eq:growth}) we can use the concept of the logarithmic norm.
(See e.g.\cite{HunVer03}.) Hence, for many important class of
matrices we can prove the validity of (\ref{eq:growth}) with
$\omega \le 0.$}
\end{remark}

\begin{remark}{\rm  We note that a huge class of important differential
operators generate contractive semi-group. This means that for such
problems -assuming the exact solvability of the split
sub-problems- the iterative splitting method is convergent in
second order to the exact solution. }
\end{remark}

Modify to :

\begin{eqnarray}
\label{gleich_kap3_3a} && \frac{\partial c^i(t)}{\partial t} = A
c^i(t) \; + \; B c^{i-1}(t), \;
\mbox{with} \; \; c^i(t^n) = c^{i-1}({t^{n+1}}) \\
&& \mbox{and the starting values} \; c^{0}(t^n) = c(t^n) \; \mbox{results of
last iteration} \; , \;
c^{-1}(t^n) = 0.0 , \nonumber \\
&& \frac{\partial c^{i+1}(t)}{\partial t} = A c^i(t) \; + \; B c^{i+1}(t), \; \\
&& \mbox{with} \; \; c^{i+1}(t^n) = c^{i}({t^{n+1}})\; , \nonumber \\
&& \epsilon > | c^{i+1} (t^{n+1}) - c^{i-1}(t^{n+1})| \mbox{Stop criterion} \\
&& \mbox{result for the next time-step} \\
&&  c(t^{n+1}) = c^{m}(t^{n+1}) , \; \mbox{for} \;  m \; \mbox{fulfill the stop-criterion}
\end{eqnarray}
for each $i=0,2, \dots$, where $c^n$ is the known split
approximation at the previous time level.

\section{Numerical Integration of the Integro-Part}
\label{integro}

We deal with the following integro-differential equation:

\begin{eqnarray}
\label{neutron}
&& \frac{\partial u}{\partial t} =  \int_0^t u(s) \;  ds , \\
&& u(0) = u_0 , 
\end{eqnarray}

The integration part is done numerically with:

Trapezoidal rule:

\begin{eqnarray}
 \int_a^b f(x)\,dx \approx \frac{b-a}{n} \left( {f(a) + f(b) \over 2} + \sum_{k=1}^{n-1} f \left( a+k \frac{b-a}{n} \right) \right)
\end{eqnarray}
where the sub-intervals have the form $[k h, (k+1) h]$, with $h = (b−a)/n$ and $k = 0, 1, 2, ..., n−1$.

The higher order formulas are given as closed Newton–Cotes formulas
are given as

\begin{table}[h]
\centering
\begin{tabular}{| c | c | c | c |}
	\hline
  Degree  &	Common name & 	Formula  &	Error term \\
 1  &	Trapezoid rule & 	$\frac{b-a}{2} (f_0 + f_1)$ & 	$-\frac{(b-a)^3}{12}\,f^{(2)}(\xi) $ \\
2 &	Simpson's rule &   	$\frac{b-a}{6} (f_0 + 4 f_1 + f_2)$ & 	$-\frac{(b-a)^5}{2880}\,f^{(4)}(\xi) $ \\
3 & 	Simpson's $3/8$ rule & 	$\frac{b-a}{8} (f_0 + 3 f_1 + 3 f_2 + f_3) $ &	$-\frac{(b-a)^5}{6480}\,f^{(4)}(\xi)$ \\
4 & 	Boole's rule &	$\frac{b-a}{90} (7 f_0 + 32 f_1 + 12 f_2$ &	$-\frac{(b-a)^7}{1935360}\,f^{(6)}(\xi) $  \\
   &  &	$  + 32 f_3 + 7 f_4)$ &	  \\
	\hline
\end{tabular}
\caption{\label{table_1} Numerical Integration formulas (Closed Newton–Cotes Formulas).}
\end{table}
where  $f_i$ is a shorthand for $f(x_i)$, with $x_i = a + i (b - a) / n$, and $n$ the degree.

We obtain the following formulas for the Trapezoidal-rule:
\begin{eqnarray}
\label{neutron}
&& \frac{\partial u}{\partial t} =  t/2 (u(0) + u(t)) \;  ds , \\
&& u(0) = u_0 , 
\end{eqnarray}
and obtain the analytical result:
\begin{eqnarray}
\label{neutron}
&& u(t) =  \frac{2}{2} \exp(\frac{t^2}{4}) u(0) - \frac{1}{2} u(0) ,
\end{eqnarray}

For the higher order formula like Simpsons-rule, we have the following results:
\begin{eqnarray}
\label{neutron}
&& \frac{\partial u}{\partial t} =  t/6 (u(0) + 4 u(t/2) + u(t)) \;  ds , \\
&& u(0) = u_0 , 
\end{eqnarray}

We apply the idea of the polynomial solution:

$u(t) = a_0 + a_1 t + a_2 t^2 + a_3 t^3 + \ldots $

and we obtain the results with deriving the coefficients :
\begin{eqnarray}
\label{neutron}
&& a_1  + 2 a_2 t + 3 a_3 t^2 + \ldots  \\
&& =  t/6 \big( a_0 + 4 ( a_0 + a_1 t/2 + a_2 t^2/4 + a_3 t^3/8 + \ldots) \nonumber \\
&&  +  a_0 + a_1 t + a_2 t^2 + a_3 t^3 + \ldots \big) \;  , \nonumber \\
&& a_0 = u_0 , 
\end{eqnarray}
and we obtain via coefficient comparison:
\begin{eqnarray}
\label{neutron}
&& a_0 = u_0 \\
&& a_1 = a_3 = a_5 = \ldots = 0 \\
&& a_2 = 3 a_0 \\
&& a_4 = \frac{1}{12} a_2 , \ldots , 
\end{eqnarray}

\begin{remark}
Such fast algorithms of generalized Taylor series about a function (here
we apply numerical integration formulas) are computed very efficient,
see also  the decomposition ideas of \cite{adom94}.
\end{remark}

\section{Experiments for the Plasma resonance spectroscopy}
\label{num}

\subsection{First Example: Matrix problem with integral term}

We deal with a simpler integro-differential equations:

\begin{eqnarray}
\label{equation1}
  c'(t) & =& c + \int_0^t c(t) dt, \; t \in [0,1],
\end{eqnarray}
where we assume $ \int_0^t c(t) dt = t c(t)$ as a first
order approximation of the integral and deal
with:
\begin{eqnarray}
\label{equation1}
  c'(t) & =& c + t c(t), \; t \in [0,T], \\
c(0) = c(t_0) = 1 ,
\end{eqnarray}
where $T = 10.0$ and we have the analytical solution 
for the approximation which is given as:
\begin{eqnarray}
\label{equation1}
  c(t) & =& \exp(t + \frac{t^2}{2}) u(0) ,
\end{eqnarray}

We split into:
\begin{eqnarray}
\label{equation1}
  A & = & 1 , \\
  B(t) & = & t ,
\end{eqnarray}

We have the following solutions for the iterative scheme:
\begin{equation}
\begin{array}{c}
 c_1(t) = \exp(A (t^{n+1} - t )) c(t^n) , \quad t \in (t^n,t^{n+1}], \\
\end{array} \label{eq:err1}
\end{equation}
\begin{equation}
\begin{array}{l l}
 c_2(t) & = \exp(\int_{t_n}^{t^{n+1}}B(t) dt ) c(t^n) \\
& + \int_{t^n}^{t^{n+1}} \exp(\int_{s}^{t^{n+1}} B (t^{n+1} - t)) dt A c_1(s) ds , \quad t \in (t^n,t^{n+1}] .
\end{array} \label{eq:err1}
\end{equation}
where $n = 0, 1, \ldots, N$ and $t^N=T$ while the time-steps
are given as $\Delta t = t^{n+1} - t^n$.

We deal with the following recurrence relations with even and odd iterations:

for the odd iterations: $i = 2m+1$,

for $m= 0, 1, 2, \ldots$
\begin{equation}
\label{odd_1}
\begin{array}{l l}
 c_{i}(t) & = \exp(A (t - t^n )) c(t^n) \\
 & + \int_{t^n}^{t} \exp(s A) B(s) c_{i-1}(t^{n+1} - s) \; ds , \quad t \in (t^n,t^{n+1}] .
\end{array}
\end{equation}

For the even iterations: $i = 2m$,

for $m= 1, 2, \ldots$
\begin{equation}
\label{even_1}
\begin{array}{l l}
 c_{i}(t) & = \exp(\int_{t_n}^{t^{n+1}} B (s) ds) c(t^n) \\
 & + \int_{t^n}^{t} \exp(\int_0^s B(t) dt) A c_{i-1}(t^{n+1} - s) \; ds , \quad t \in (t^n,t^{n+1}] .
\end{array}
\end{equation}

In the table \ref{table_1} we obtain the numerical results of the iterative splitting scheme.
\begin{table}[h]
\centering
\begin{tabular}{| c | c | c | c | c | c |}
	\hline
	 & $\Delta$t=1 & $\Delta$t=0.5 & $\Delta$t=0.25 & $\Delta$t=$2^{-3}$ & $\Delta$t=$2^{-4}$ \\
	\hline
	$c_1$ & $1.7634$ & $0.4958$ & $0.1793$ & $0.0753$ & $0.0343$ \\
	\hline
	$c_2$ & $0.8628$ & $0.1444$ & $0.0282$ & $0.0061$ & $0.0014$ \\
	\hline
	$c_3$ & $0.2220$ & $0.0104$ & $5.2127$e-04$$ & $2.8455$e-05$$ & $1.6511$e-06$$ \\
	\hline
	$c_4$ & $0.1116$ & $0.0041$ & $1.8660$e-04$$ & $9.7846$e-06$$ & $5.5769$e-07$$ \\
	\hline
	$c_5$ & $0.0971$ & $0.0039$ & $1.8367$e-04$$ & $9.7418$e-06$$ & $5.5644$e-07$$ \\
	\hline
	$c_6$ & $0.0956$ & $0.0039$ & $1.8365$e-04$$ & $9.7418$e-06$$ & $5.5644$e-07$$ \\
	\hline
	$c_7$ & $0.0954$ & $0.0039$ & $1.8365$e-04$$ & $9.7416$e-06$$ & $5.5644$e-07$$ \\
	\hline
	$c_8$ & $0.0954$ & $0.0039$ & $1.8366$e-04$$ & $9.7343$e-06$$ & $5.5556$e-07$$ \\
	\hline
\end{tabular}
\caption{\label{table_1} Numerical experiment with 10 iterative steps for the first example.}
\end{table}

In the Figure \ref{example_1}, we present the one-side and two-side
iterative results.
\begin{figure}[ht]
\begin{center}  
\includegraphics[width=9.0cm,angle=-0]{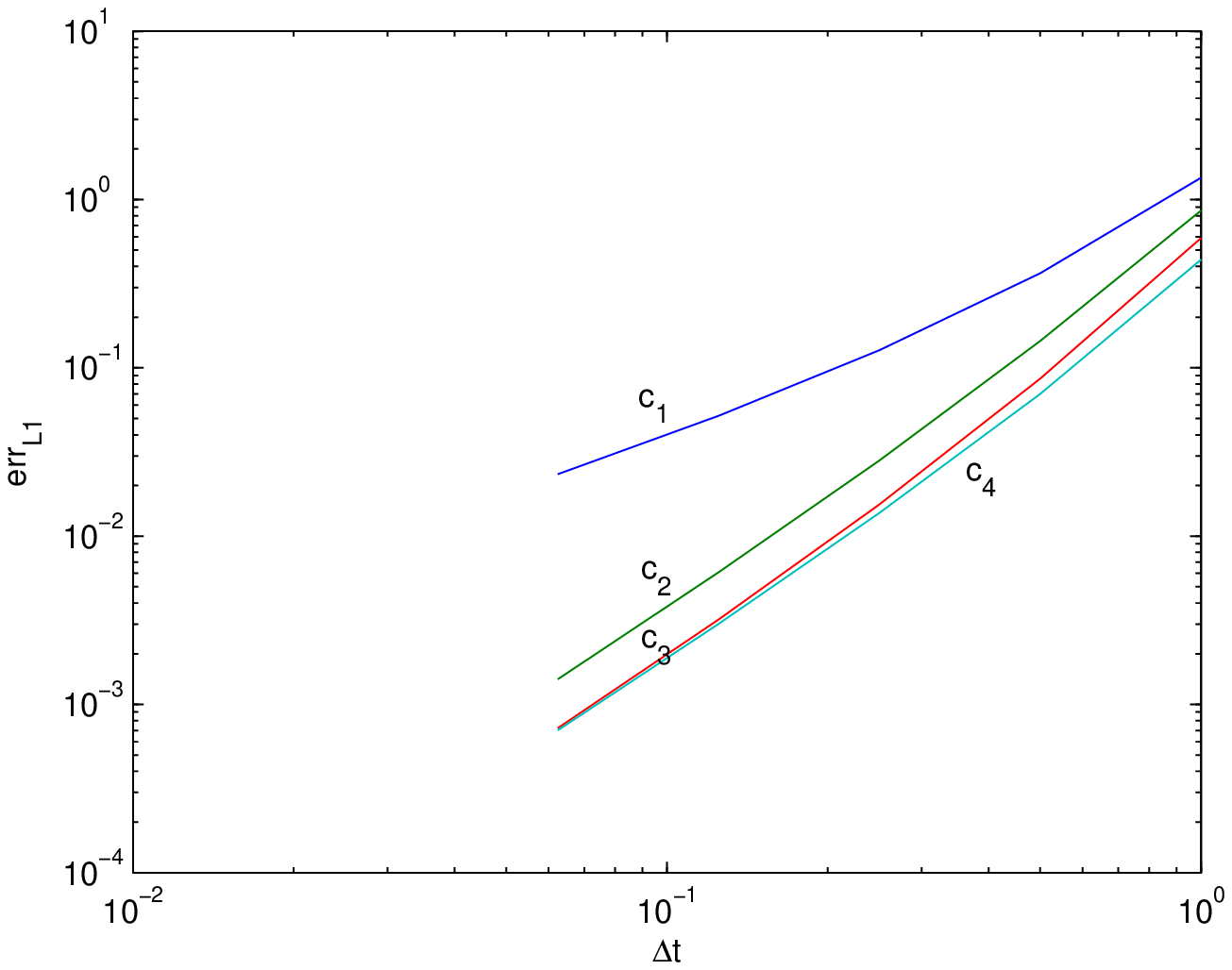} 
\includegraphics[width=9.0cm,angle=-0]{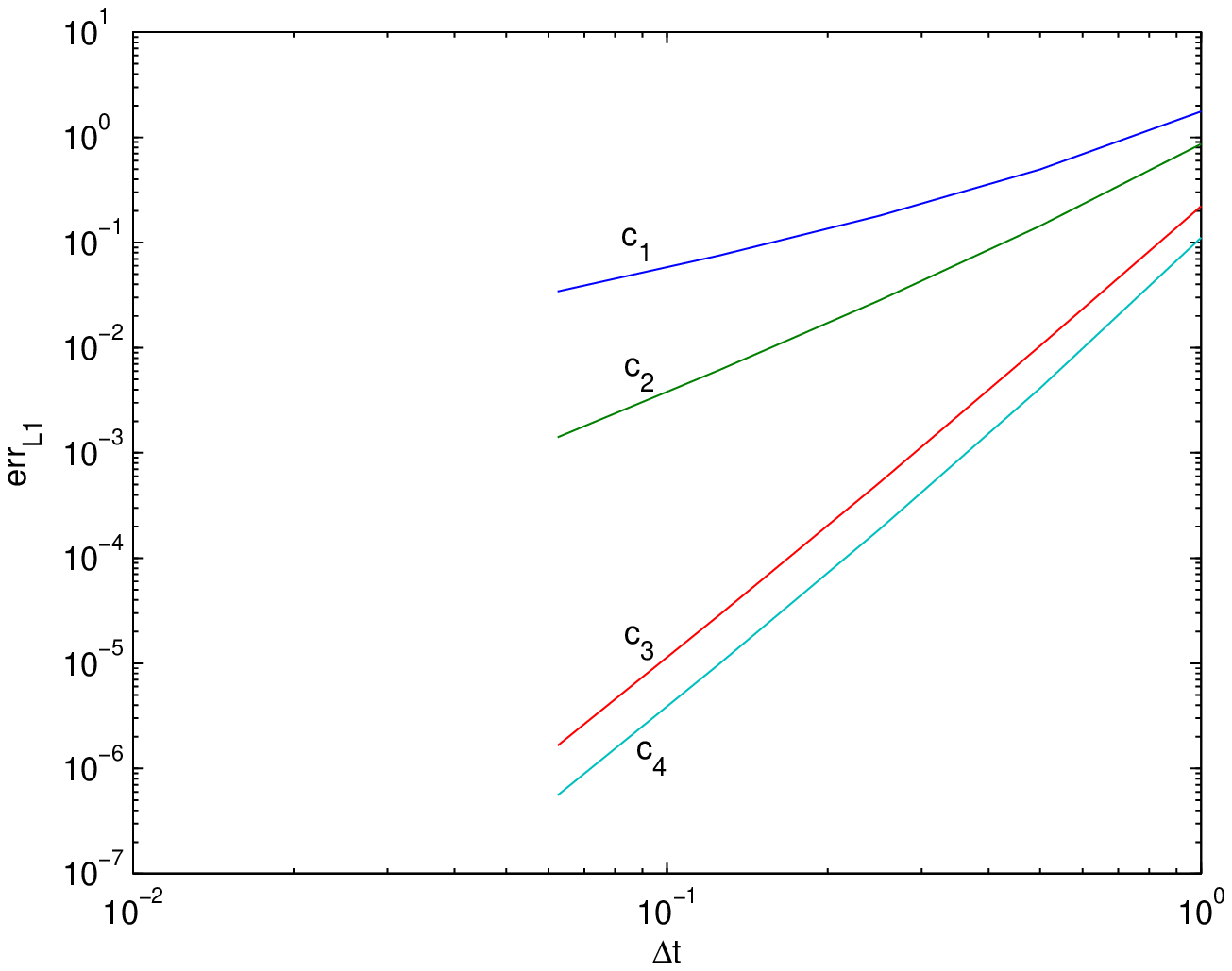} 
\includegraphics[width=9.0cm,angle=-0]{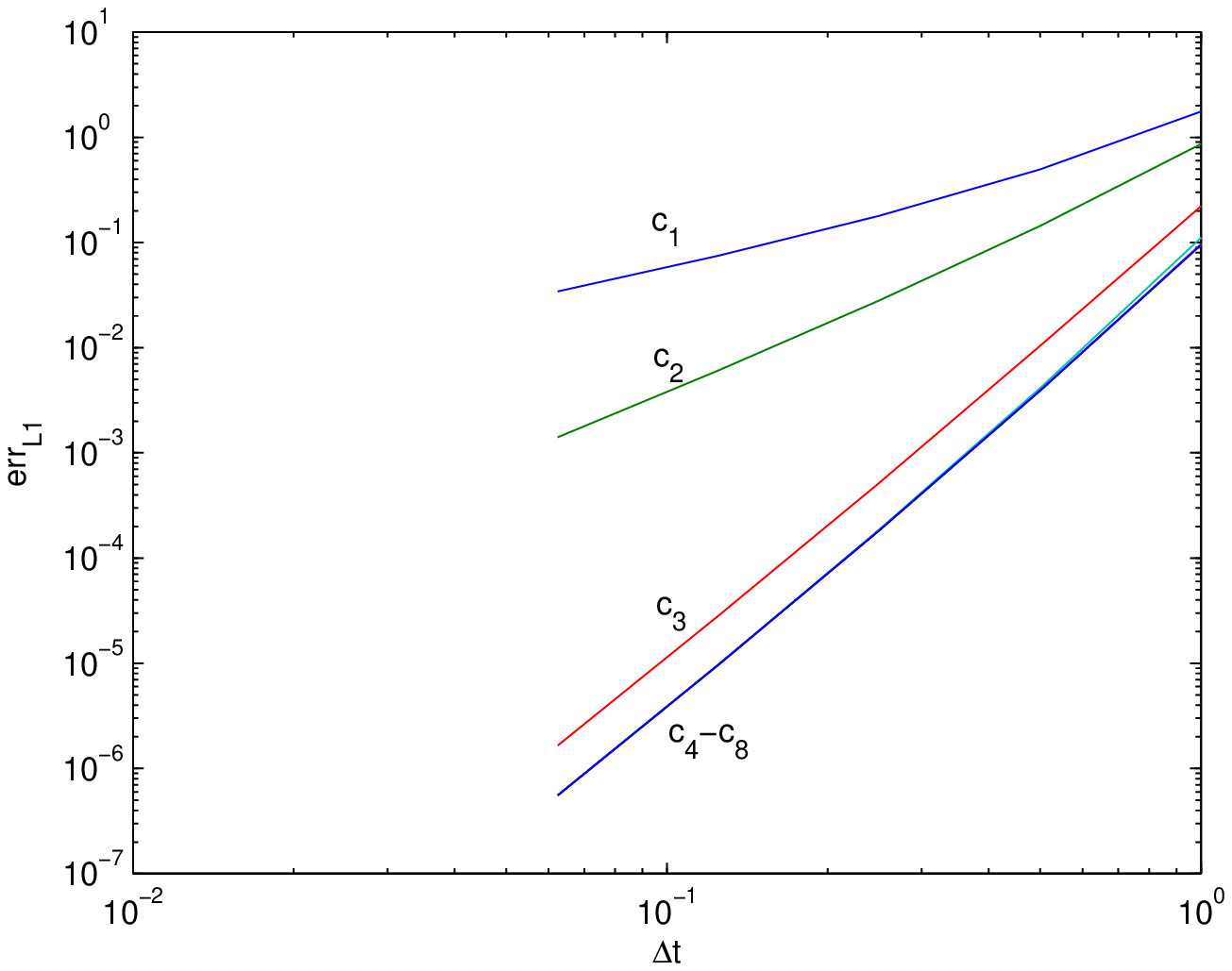} 
\end{center}
\caption{\label{example_1} Numerical errors of the one-side and two-side Splitting scheme:  one-side splitting over $A$ (upper figure), one-side splitting over $B$ (middle figure) and  two-side splitting scheme alternating between $A$ and $B$ (lower figure) with $1, \ldots, 8$ iterative steps.}
\end{figure}

\begin{remark}
In the experiments, we obtain improved results with each additional step.
By the way the solution blows up and we have to use also very fine time-steps to control the errors. 
Optimal results are obtain by using the integral part (stiff part) as the
implicit part in the iteration (one-side over $B$).

\end{remark}

\subsection{Real-life problem}

In the following subsections, we present our experiments
based on the neutron transport.

A simplified  one-dimensional model is given as:
\begin{eqnarray*}
&& \partial_t \; c + v \partial_x c - D \partial_{xx} c + \sigma \; c =  \int_{\Omega} \kappa(x,v,v') c(x,v',t) \; dv' , \\
\end{eqnarray*}
The velocity $v$ and the diffusion $D$ is given by the 
plasma model.
The initial conditions are given by $c(x, 0) = c_0(x)$ 
and the boundary conditions are trivial $\partial_n c(x,t) = 0$.

A first integral operator is given as:
\begin{eqnarray*}
  \int_{\Omega} \kappa(x,v,v') c(x,v',t) \; dv' = \int_0^T c(x,t) dt, \\
\end{eqnarray*}

A second integral operator is given as:

We assume a simple collision operator: $\kappa(x,v,v') = q(v') (1 + v'^2) $

where $q(v')$ is the potential, $e.g. v'^2$. \\

We deal with the first integral operator and define the
following operators:

$A =  v \frac{1}{\Delta x} [1 -1] I - D  \frac{1}{\Delta x^2} [1 -2 1] I $

$ B = (- \sigma + t) I $

while 

$ \exp( B t)  = \exp( (- \sigma t + t^2/2) I) $

where $I$ is the identity matrix of rank.

\subsubsection{One phase example}

The next example is a simplified real-life problem 
for a neutron transport equation,
which includes the gain and loss of a neutron

We concentrate on the computational benefits of a fast
computation of the iterative scheme, given with matrix exponentials. \\

The equation is given as:
\begin{eqnarray}
\label{mobile1_1_2}
&& \partial_t c  + \nabla \cdot {\bf F} c  = - \lambda_1 c + \int_0^t \lambda_2 c(x,t) dt ,  \; \mbox{in} \; \Omega \times [0, t] , \\
&& {\bf F}  = {\bf v}  - D \nabla , \\
&& c({\bf x}, t) = c_{0}({\bf x}),  \; \mbox{on} \; \Omega , \\
&& c({\bf x}, t) = c_{1}({\bf x},t) , \;  \mbox{on} \; \partial \Omega  \times [0, t] ,
\end{eqnarray}

In the following we deal with the semi-discretized equation given with the 
matrices:
\begin{eqnarray}
\label{eq20}
&& \partial_t {\bf C} = 
\left( A - \Lambda_1 + \Lambda_2 \right)  {\bf C} , 
\end{eqnarray}
where $ {\bf C} = (c_{1}, \ldots, c_{I})^T$ is the solution of
the species in the mobile phase in each spatial discretization point (i = 1, \ldots, I).

We have the following two operators for the splitting method:
\begin{eqnarray}
A & = &  \frac{D}{\Delta x^2}\cdot  \left(\begin{array}{rrrrr}
 -2 & 1 & ~ & ~ & ~ \\
 1 & -2 & 1 & ~ & ~ \\
 ~ & \ddots & \ddots & \ddots & ~ \\
 ~ & ~ & 1 & -2 & 1 \\
 ~ & ~ & ~ & 1 & -2
\end{array}\right) \\[6pt]
 & + & \frac{v}{\Delta x}\cdot \left(\begin{array}{rrrrr}
 1 & ~ & ~ & ~ & ~ \\
 -1 & 1 & ~ & ~ & ~ \\
 ~ & \ddots & \ddots & ~ & ~ \\
 ~ & ~ & -1 & 1 & ~ \\
 ~ & ~ & ~ & -1 & 1
\end{array}\right)~\in~\R^{I \times I}
\end{eqnarray}
where $I$ is the number of spatial points.
\begin{eqnarray}
\Lambda_1 & = &    \left(\begin{array}{rrrrr}
 \lambda_1 & 0 & ~ & ~ & ~ \\
        0 &  \lambda_1 & 0 & ~ & ~ \\
 ~ & \ddots & \ddots & \ddots & ~ \\
 ~ & ~ & 0 & \lambda_1 & 0 \\
 ~ & ~ & ~ & 0 & \lambda_1
\end{array}\right) ~\in~\R^{I \times I}
\end{eqnarray}
For the integral term we have the following ideas:

{\bf Case 1:}

$ \int_0^t \lambda_2 c(x,t) dt \approx \lambda_2 t c(x,t)$

and we obtain the Matrix:
\begin{eqnarray}
\Lambda_2 & = &    \left(\begin{array}{rrrrr}
 \lambda_2 t^2/2& 0 & ~ & ~ & ~ \\
        0 &   \lambda_2 t^2/2 & 0 & ~ & ~ \\
 ~ & \ddots & \ddots & \ddots & ~ \\
 ~ & ~ & 0 &  \lambda_2 t^2/2 & 0 \\
 ~ & ~ & ~ & 0 &  \lambda_2 t^2/2
\end{array}\right) ~\in~\R^{I \times I}
\end{eqnarray}
For the operator splitting scheme we apply $A $ and $B = - \Lambda_1 + \Lambda_2$ and we apply the
iterative splitting method, given in equations (\ref{odd_1})- (\ref{even_1}).

{\bf Case 2:}

We integrate the operator B with respect to the previous solutions ${\bf C}_{i-1}$ and we obtain the Matrix:
\begin{eqnarray}
&& \Lambda_2({\bf C}_{i-1}) \nonumber \\
&& =    \left(\begin{array}{c c c c}
 \int_0^t \lambda_2  \; c_{1, i-1}(x,s) \; ds & 0 & \hdots &  0 \\
        0 &   \int_0^t \lambda_2  \; c_{2, i-1}(x,s) \; ds & 0 &   \\
 \vdots  &  \ddots & \ddots & \vdots \\
0  & \hdots & 0 &  \int_0^t \lambda_2  \; c_{I, i-1}(x,s) \; ds
\end{array}\right) \nonumber \\
&& \in \R^{I \times I}
\end{eqnarray}

We obtain $B({\bf C}) = \Lambda_2({\bf C}_{i-1}) + \Lambda_1 {\bf C} $

The iterative scheme is given as:

For $i=  1, 2, \ldots$
\begin{equation}
\label{odd_2}
\begin{array}{l l}
 {\bf C}_{i}(t) & = \exp(A (t - t^n )) {\bf C}(t^n) \\
 & + \int_{t^n}^{t} \exp((t-s) A) B({\bf C}_{i-1}(s)) \; ds , \quad t \in (t^n,t^{n+1}] .
\end{array}
\end{equation}

For the reference solution, we apply a fine time- and spatial scale without
decoupling the equations.

The Figure \ref{two_phase} present the numerical errors between the exact and the
numerical solution. Here we obtain optimal results for one-side iterative schemes on operator $B$, means we iterate with respect to $B$ and use $A$ as right hand side. 
\begin{figure}[ht]
\begin{center}  
\includegraphics[width=9.0cm,angle=-0]{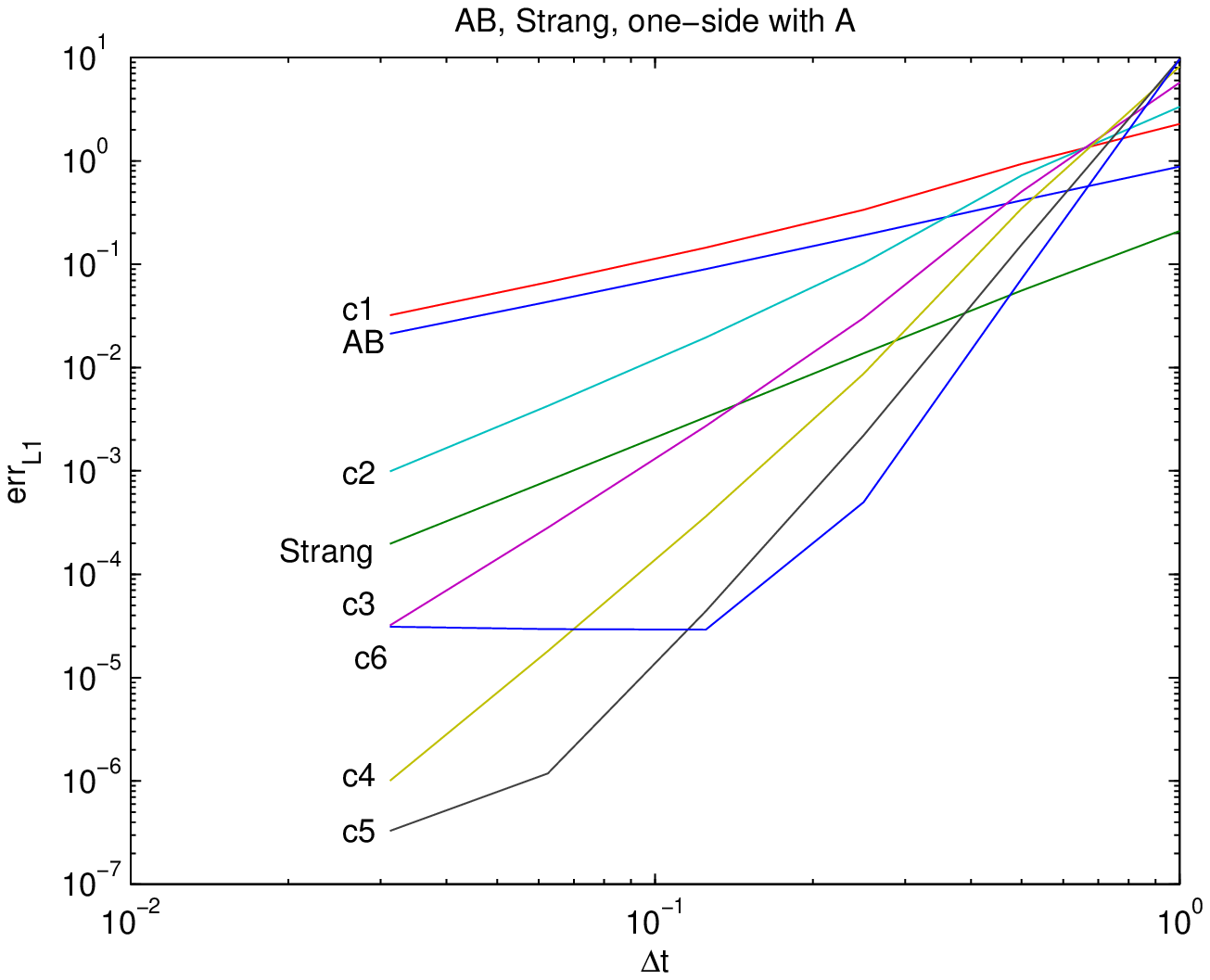} 
\includegraphics[width=9.0cm,angle=-0]{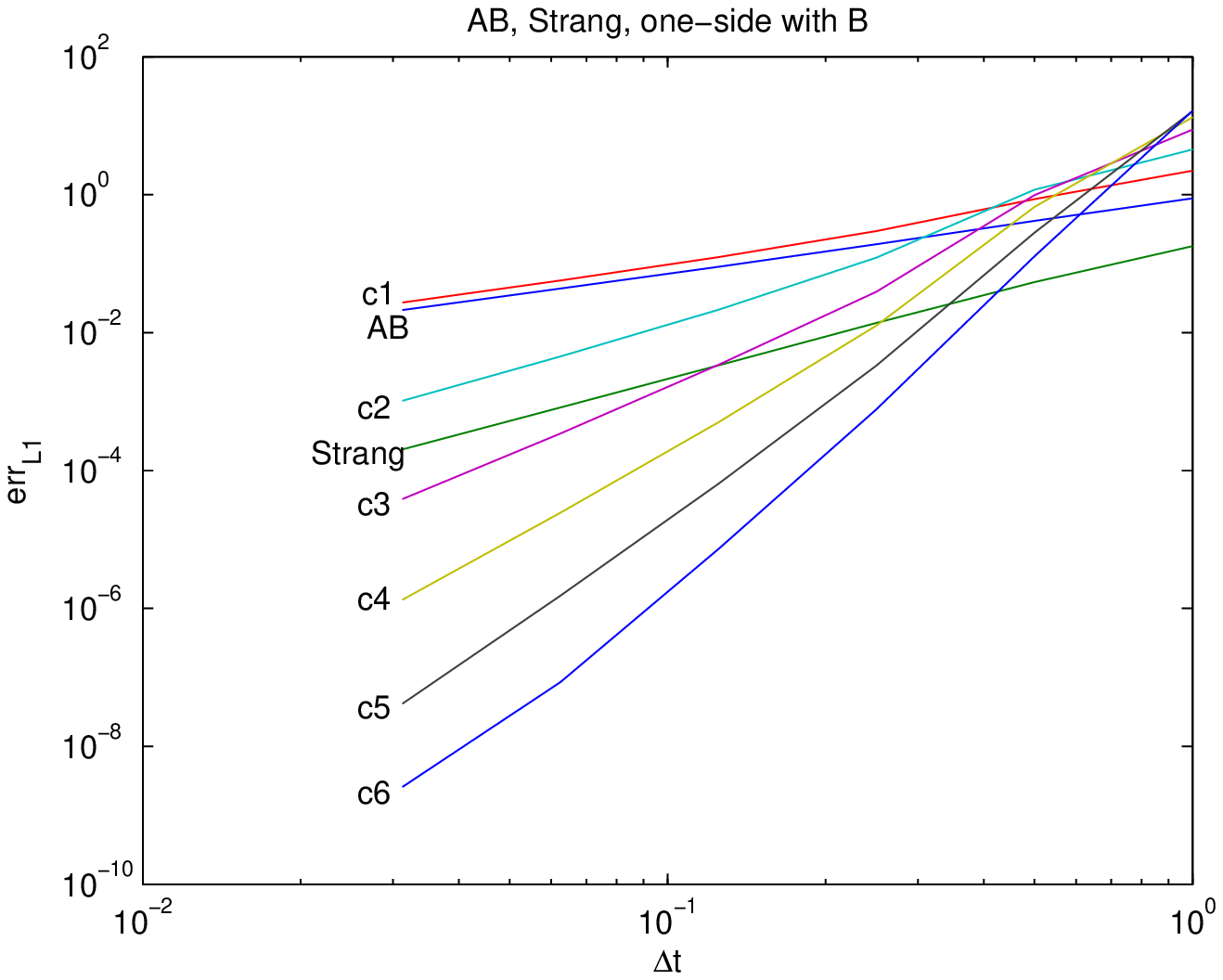} 
\includegraphics[width=9.0cm,angle=-0]{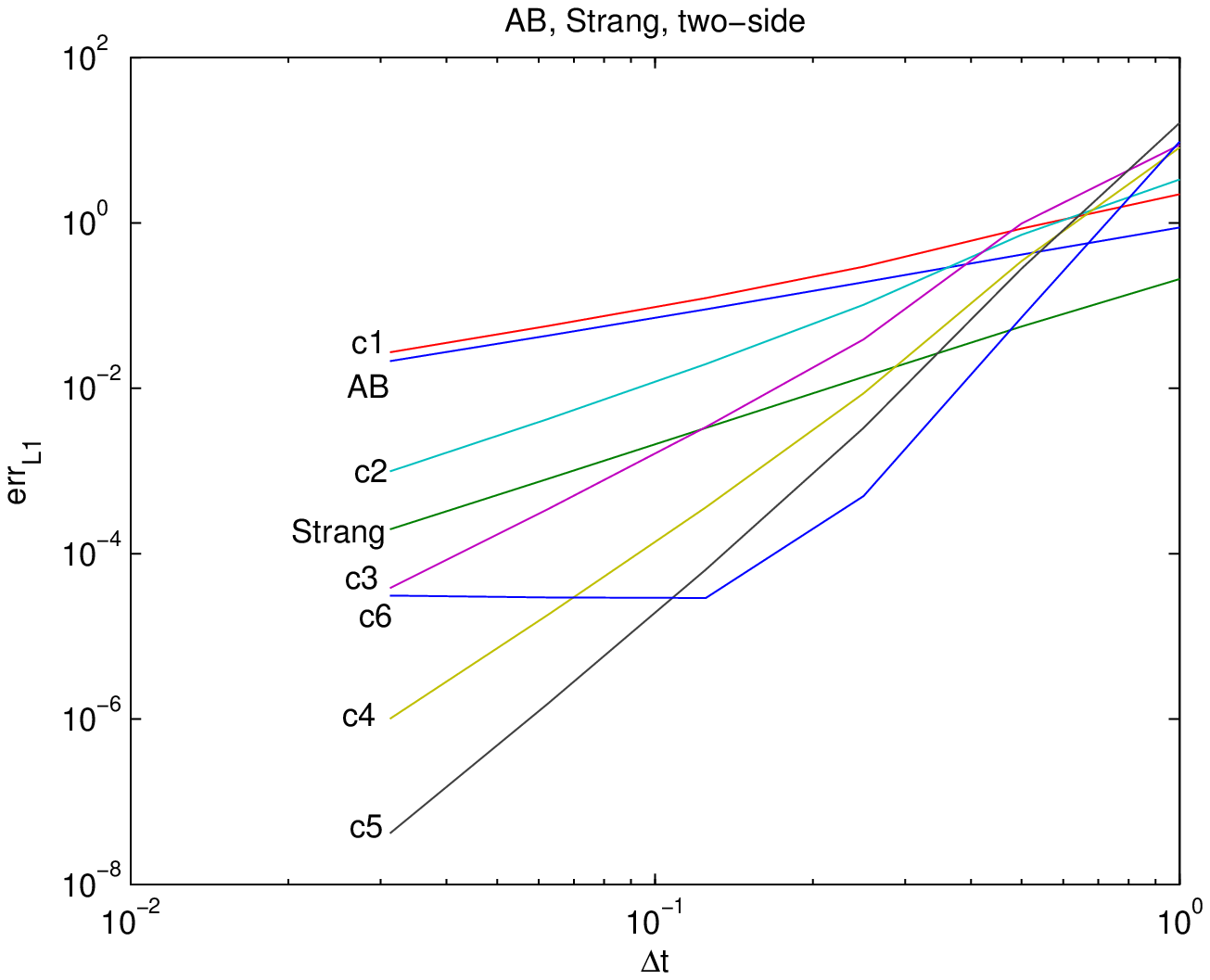} 
\end{center}
\caption{\label{two_phase} Numerical errors of the one-side Splitting scheme with $A$ (upper figure),  the one-side Splitting scheme with $B$ (middle figure) and the
iterative schemes with $1, \ldots, 6$ iterative steps (lower figure).}
\end{figure}

\begin{remark}
For all iterative schemes, we can reach faster results as for the
The iterative schemes with fast computations of the exponential matrices
standard schemes.
With $4-5$ iterative steps we obtain more accurate results as we did
for the expensive standard schemes.
With one-side iterative schemes we reach the best convergence results.
\end{remark}

\section{Conclusions and Discussions }
\label{concl}

We present the coupled model for a transport model
for deposition species in a plasma environment.
We assume the flow field is computed by the plasma 
model and the transport of the deposition species 
with a transport-reaction model.

Such a first model can help to understand the 
important modeling of the plasma environment in 
a CVD reactor.

\bibliographystyle{plain}

\end{document}